\algnewcommand\algorithmicinput{\textbf{Input:}}
\algnewcommand\INPUT{\item[\algorithmicinput]}
\algnewcommand\algorithmicoutput{\textbf{Output:}}
\algnewcommand\OUTPUT{\item[\algorithmicoutput]}
\theoremstyle{definition}
\newtheorem{theorem}{Theorem}
\newtheorem{lemma}{Lemma}
\newtheorem{proposition}{Proposition}
\newtheorem{definition}{Definition}
\newtheorem{example}{Example}
\newtheorem{remark}{Remark}
\newcommand*{\Unif}{\operatorname{U}}
\newcommand*{\II}{[0,1]}
\newcommand*{\IIo}{(0,1)}
\newcommand*{\IN}{\mathbb{N}}
\newcommand*{\IR}{\mathbb{R}}
\newcommand*{\conv}{\operatorname{conv}}
\newcommand*{\Cov}{\operatorname{Cov}}
\newcommand*{\Prob}{\mathbb{P}}
\newcommand*{\rd}{\mathrm{d}}
\newcommand*{\Var}{\operatorname{Var}}
\newcommand{\bbb}{\bm{b}}
\newcommand{\bu}{\bm{u}}
\newcommand{\bU}{\bm{U}}
\newcommand{\bX}{\bm{X}}
\newcommand{\bone}{\bm{1}}
\newcommand*{\deq}{\ \smash{\omu{\text{\tiny{d}}}{=}{}}\ }
\renewcommand*{\i}{{-1}}
\newcommand{\ou}[3]{%
  \mathrel{%
    \vcenter{\offinterlineskip
      \ialign{##\cr$#1$\cr\noalign{\kern-#3}$#2$\cr}%
    }%
  }%
}
\newcommand*{\omu}[3]{\underset{#3}{\overset{#1}{#2}}}
\newcommand{\sqc}[2]{#1_{1},\dots,#1_{#2}}
\newcommand*{\T}{^{\top}}
\title{Matrix compatibility and correlation mixture representation of generalized Gini's gamma}
\author{Takaaki Koike\footnote{
Graduate School of Economics, Hitotsubashi University, 2-1 Naka, Kunitachi, Tokyo 186-8601, Japan, 
E-mail: takaaki.koike@r.hit-u.ac.jp
} \ and\
Marius Hofert\footnote{
Department of Statistics and Actuarial Science,
Faculty of Science,
The University of Hong Kong,
Pokfulam, Hong Kong,
E-mail: mhofert@hku.hk}
}
\begin{document}
\maketitle
\date{}

\begin{abstract}
 Representations of measures of
  concordance in terms of Pearson's correlation coefficient are studied.
  All transforms of random variables are characterized such that the correlation
  coefficient of the transformed random variables is a measure of concordance.
  Gini's gamma then is generalized and it is shown that the resulting generalized Gini's gamma can
  be represented as a mixture of measures of concordance that are Pearson's
  correlation coefficients of transformed random variables.
  As an application of this correlation mixture representation of generalized
  Gini's gamma, lower and upper bounds of the compatible set of generalized
  Gini's gamma, the collection of all square matrices of pairwise generalized Gini's gammas, are derived.
  \end{abstract}

\hspace{0mm}\\
\emph{MSC classification:} 62H20, 62H10\\
\noindent \emph{Keywords:} 
Compatibility; 
copula; 
correlation coefficient; 
Gini's gamma;
measure of concordance; Copula.

\section{Introduction}\label{sec:introduction}
A bivariate random vector $(X,Y)$ is said to be more \emph{concordant} than another random vector $(X',Y')$ if $X$ and $Y$ are more likely to take large (small) values simultaneously than $X'$ and $Y'$ do.
If $X\sim F_X$ and $Y\sim F_Y$ are continuous, concordance is a property of the $\emph{copula}$ $C$ of $(X,Y)$, which is the distribution
function of $(F_X(X),F_Y(Y))$ with standard uniform marginal
distributions.  A measure of concordance quantifies concordance of $(X,Y)$ by a single number in $[-1,1]$ (see Definition~1 for
an axiomatic definition of a measure of concordance and Examples~1 and~3 for
examples).  Measures of concordance are axiomatically required to depend
  only on the underlying copula $C$ of $(X,Y)$. The
  well-known Pearson's correlation coefficient
  $\rho(X,Y)=\Cov(XY)/\sqrt{\Var(X)}\sqrt{\Var(Y)}$ for quantifying linear dependence is therefore not a measure of
  concordance (see Embrechts, McNeil \& Straumann, 2002 for shortcomings of correlation as
  a measure of association).
However, as we will do in Section~\ref{sec:correlation:based:mocs}, these limitations can be overcome by suitably transforming $X$ and $Y$.

For a multivariate random vector $\bX=(\sqc{X}{d})$, $d\geq 3$, we consider a square matrix whose $(i,j)$-entry is the measure of concordance $\kappa$ of $(X_i,X_j)$, for $i,j=1,\dots,d$.
This matrix, called the $\kappa$-\emph{matrix} of a $d$-dimensional copula $C$ associated with $\bX$, summarizes pairwise monotone dependencies of $\bX$ similarly to how the correlation matrix $\rho(\bX)=(\rho(X_i,X_j))$ summarizes pairwise linear dependence of $\bX$.
The range of the mapping from $d$-dimensional random vector to $\kappa$-matrices, that is the collection of all possible $\kappa$-matrices for a given measure of concordance $\kappa$, is called the \emph{compatible set} of $\kappa$ and denoted by $\mathcal R_d(\kappa)$ (see Definition~6 below).

Although the entries of a $\kappa$-matrix are measures of concordance, they cannot take arbitrary values in $[-1,1]$.
For instance, for the 3-dimensional equicorrelation matrix $P(r)=(r_{i,j})$, with $r_{j,j}=1$ and $r_{i,j}=r \in [-1,1]$ for $i,j=1,2,3$ and $i\neq j$, the matrix $P\left(-\frac{1}{2}\right)$ is attainable for Gaussian rank correlation but not for Blomqvist's beta (see Part~2 of Proposition~3 and Example~5 below).
Characterizing a compatible set $\mathcal R_d(\kappa)$ is important to judge whether a given square matrix is attainable as a $\kappa$-matrix.

Denote by $\mathcal P_d$ the set of all $d$-dimensional correlation matrices.
Then it is well-known that $\mathcal P_d$, the compatible set of $\rho$, coincides with the so-called \emph{elliptope}, which is the set of $d\times d$ symmetric positive semi-definite matrices with diagonal entries equal to $1$.
Every $P \in \mathcal P_d$ is attainable by a normal distribution with covariance matrix $P$.
However, not all elements in $\mathcal P_d$ are attainable for non-normal marginals; for example, in the case of non-continuous marginal distributions, the set of $d$-dimensional correlation matrices of symmetric Bernoulli distributions forms the so-called \emph{cut polytope} denoted by $\mathcal P_d^{\rm{B}}$, which is a strict subset of $\mathcal P_d$ for $d \geq 3$ (see Section~5 below for details).
Hofert \& Koike (2019) showed that compatible sets of typical measures of concordance (such as Spearman's rho $\rho_{\text{S}}$, Blomqvist's beta $\beta$ and Gaussian rank correlation $\zeta$) are smaller than $\mathcal P_d$ and larger than $\mathcal P_d^{\rm{B}}$.
More specifically, $\mathcal R_d(\rho_{\text{S}})=\mathcal P_d$ for $d \leq 9$ (Devroye \& Letac, 2015), $\mathcal R_d(\rho_{\text{S}}) \subsetneq\mathcal P_d$ for $d\geq 12$ (Wang, Wang \& Wang, 2019), $\mathcal R_d(\beta)=\mathcal P_d^{\rm{B}}$ and $\mathcal R_d(\zeta)=\mathcal P_d$ (Hofert \& Koike, 2019), and $\mathcal R_d(\tau)=\mathcal P_d^{\rm{B}}$ (McNeil, Ne{\v{s}}lehov{\'a} \& Smith,\ 2022) where $\tau$ is Kendall's tau.
Let us also mention here that Embrechts, Hofert \& Wang\ (2016) characterized the compatible set of tail dependence coefficients although they are not measures of concordance.

Most results concerning the compatible sets mentioned before are derived based on the \emph{correlation representation} of a measure of concordance $\kappa$, given by%
\begin{align}\label{eq:corr:moc:intro}
\kappa(X,Y)=\rho(g_1(U),g_2(V)),\quad
(U,V)=(F_X(X),F_Y(Y)),
\end{align}
for continuous $X\sim F_X$, $Y \sim F_Y$ and two Borel functions $g_1,g_2: (0,1) \rightarrow \IR$. As such, $\kappa$ is obtained by first transforming the original random vector $(X,Y)$ to its ``rank'' $(U,V)=(F_X(X),F_Y(Y))$ and then transforming $(U,V)$ with $(g_1,g_2)$ to ensure that the correlation of $(g_1(U),g_2(V))$ satisfies certain desirable properties.
Such a representation of $\kappa$ is not only a key to studying the compatible set of $\kappa$ but also of great benefit to intuitively understand and explain the construction and ideas behind $\kappa$.
In addition to the interpretability, the correlation representation helps constructing estimators of $\kappa$, investigating their asymptotic properties and analyzing robustness of $\kappa$ (see Raymaekers \& Rousseeuw, 2021).
When $g_1$ and $g_2$ are continuous, Hofert \& Koike (2019) characterized $(g_1,g_2)$ such that $\kappa$ in~\eqref{eq:corr:moc:intro} is a measure of concordance.
On account of the continuity assumption on $g_1$ and $g_2$, this result does not cover, for example, Blomqvist's beta.

As a first contribution of the present article we refine this characterization theorem by relaxing the continuity assumption on $g_1$ and $g_2$ so that Blomqvist's beta is covered. A main result of ours states that the transformed random variables $g_1(U)$ and $g_2(V)$ must follow the same symmetric distribution so that $\kappa$ in~\eqref{eq:corr:moc:intro} is a measure of concordance (see Theorem~2 for the precise statement).
As an implication of this result, the correlation coefficient of
$(X,Y)$ transformed by the wrapping function proposed by Raymaekers \&
Rousseeuw (2021) as a fast robust correlation cannot be a measure of
concordance.

\emph{Gini's gamma} $\gamma$, proposed by Gini (1914), is a measure of
concordance which does not admit a correlation representation, and its
compatible set is not known in the literature.  As surveyed in Genest, Ne{\v{s}}lehov{\'a} \& Ben~Ghorbal\ (2010), Gini's gamma has gained popularity in applications since it is an $L^1$
(Manhattan distance) alternative to Spearman's rho, which is based on the $L^2$
(Euclidean) distance between ranks (see Diaconis \& Graham, 1977 and Nelsen, 1998).  Moreover, $\gamma$ is a symmetrized version of Spearman's footrule
(Spearman, 1906) (see Salama \& Quade, 2001 and Nelsen \& \'Ubeda-Flores, 2004).  Gini's gamma $\gamma$ is also known to have good sample properties,
especially for the test of independence (see Cifarelli \& Regazzini, 1977 and
Luigi Conti \& Nikitin, 1999).

As a second contribution of the present article we generalize Gini's gamma and
show that the resulting generalized Gini's gamma can be written as a mixture of
correlation representations of measures of concordance (see Theorem~3).  As an
application of this correlation mixture representation, we derive upper and
lower bounds of the compatible set of generalized Gini's gamma in Proposition~5,
which is, to the best of our knowledge, the first result concerning compatibility
of Gini's gamma.

This article is organized as follows. In Section~2, we review measures of
concordance, in particular those of degree one and their representations.
Section~3 focuses on correlation representations of measures of concordance
and provides a characterization for such correlation-based measures of
concordance.  In Section~4 we propose the so-called generalized Gini's gamma 
and show that it can be written as a mixture of correlation representations of
measures of concordance.  Section~5 is devoted to deriving bounds of the
compatible set of generalized Gini's gamma as an application of its correlation
mixture representation. Section~6 concludes with possible avenues for future work.

\section{Measures of concordance of degree one}\label{sec:moc:degree:one}

For $d\in \IN$, a $d$-dimensional \emph{copula} ($d$-copula) $C: \II^d\rightarrow \II$ is a $d$-dimensional distribution function with standard uniform univariate marginal distributions.
Denote by $\mathcal C_d$ the set of all $d$-copulas.
We call $C' \in \mathcal C_d$ \emph{at least as concordant as} $C \in \mathcal C_d$, denoted by $C\preceq C'$, if $C(\bu)\leq C'(\bu)$ for all $\bu=(u_1,\dots,u_d) \in \II^d$.
The $d$-dimensional \emph{comonotonicity} and \emph{independence} copulas are defined by $M_d(\bu)=\min(u_j;j=1,\dots,d)$ and $\Pi_d(\bu)=u_1\cdot\ldots\cdot u_d$, respectively.
We omit the subscript $d$ when $d=2$, that is $M=M_2$ and $\Pi=\Pi_2$.
The bivariate \emph{counter-monotonicity} copula is defined by $W(u,v)=\max(u+v-1,0)$.
By the \emph{Fr\'echet--Hoeffding inequalities}, it holds that $W\preceq C \preceq M$ for all $C \in \mathcal C_2$.

Throughout the article, we fix an atomless probability space $(\Omega,\mathcal A,\Prob)$ and identify $\kappa(C)$ for a map $\kappa:\mathcal C_d\rightarrow \IR$ with $\kappa(\bU)$, where $\bU$ is a $d$-dimensional random vector on $(\Omega,\mathcal A,\Prob)$ having distribution function $C$ (denoted by $\bU \sim C$).
The following definition of a measure of concordance is an adapted version of that found in Scarsini (1984).

\begin{definition}{(Measures of concordance)}\label{def:MOC}
A map $\kappa:\mathcal C_2 \rightarrow \IR$ is called a \emph{measure of concordance} if it satisfies the following properties.
\begin{enumerate}[topsep=0cm]
    \item\label{axiom:normalization} {\bf (Normalization)} $\kappa(M)=1$.
    \item\label{axiom:permutation:invariance} {\bf (Permutation invariance)} $\kappa(V,U)=\kappa(U,V)$ for any $(U,V)\sim C \in \mathcal C_2$.
    \item\label{axiom:reflection:symmetry} {\bf (Reflection symmetry)} $\kappa(U,1-V) =-\kappa(U,V)$ for any $(U,V)\sim C \in \mathcal C_2$.
    \item\label{axiom:monotonicity} {\bf (Monotonicity)} $\kappa(C)\le\kappa(C')$ if $C\preceq C'$ for $C,C' \in \mathcal C_2$.
    \item\label{axiom:continuity} {\bf (Continuity)} $\lim_{n\rightarrow \infty}\kappa(C_n)=\kappa(C)$ if $C_n \rightarrow C$ pointwise for $C_n \in \mathcal C_2$, $n\in\IN$, and $C \in \mathcal C_2$.
 \end{enumerate}
\end{definition}

Property~\ref{axiom:normalization} and Property~\ref{axiom:reflection:symmetry} imply that $\kappa(W)=-1$, and thus that $-1 \le \kappa(C)\le 1$ for all $C \in \mathcal C_2$ by Property~\ref{axiom:monotonicity} and the Fr\'echet--Hoeffding inequalities.
Moreover, $\kappa(\Pi)=0$ since $(U,V)\deq (U,1-V)$ for $(U,V)\sim \Pi$.
Therefore, the set of axioms in Definition~1 recovers that found in Scarsini (1984).

In this article, we consider the following class of measures of concordance.

\begin{definition}{(Measures of concordance of degree $k$)}\label{def:moc:degree:k}
A measure of concordance $\kappa$ is \emph{of degree} $k \in \IN$ if $t \mapsto \kappa(tC+(1-t)C')$, $t\in\II$, is a polynomial in $t$ for every $C,\ C' \in \mathcal C_2$, and the supremal degree of such a polynomial over all $C,\ C' \in \mathcal C_2$ is $k$.
\end{definition}

Various characterizations of measures of concordance of degree one are studied in Edwards \& Taylor (2009).  
In the following we introduce one of these characterizations using so-called $D_4$-\emph{invariant} measures.
Let $\mathcal D_4 = \{e,\ \tau,\  \sigma_1,\ \sigma_2, \ \tau\sigma_1,\  \tau\sigma_2,\  \sigma_1\sigma_2,\ \tau\sigma_1\sigma_2 \}$ be the group of symmetries on $\IIo^2$, where $e(u,v)=(u,v)$, $\sigma_1(u,v)=(1-u,v)$, $\sigma_2(u,v)=(u,1-v)$ and $\tau(u,v)=(v,u)$.

\begin{definition}{(Admissible $D_4$-invariant measures)}\label{def:admissible:D4:invariant:measures}
An \emph{admissible $D_4$-invariant} measure $\mu$ is a Borel measure on $(\IIo^2,\mathfrak B(\IIo^2))$ satisfying the following properties:
\begin{enumerate}[topsep=0cm]
\item\label{cond:D4:invariance} {\bf ($D_4$-invariance)} $\mu(\xi(A))=\mu(A)$ for all $\xi \in \mathcal D_4$ and $A\in \mathfrak B(\IIo^2)$; and
\item\label{cond:finiteness:denominator} {\bf (Finiteness)} $0< \int_{\IIo^2}(M-\Pi)\rd \mu < \infty$.
\end{enumerate}
The set of all admissible $D_4$-invariant measures is denoted by $\mathcal M$.
\end{definition}

\begin{theorem}{( Edwards, Mikusi\'nski \& Taylor, 2005 and Edwards \& Taylor, 2009)}\label{thm:moc:degree:one:characterization}
A map $\kappa: \mathcal C_2\rightarrow \IR$ is a measure of concordance of degree one if and only if there exists a unique measure $\mu \in \mathcal M$ such that
\begin{align}\label{eq:moc:degree:one:characterization}
\kappa(C)=\frac{\int_{\IIo^2}(C-\Pi)\,\rd \mu}{\int_{\IIo^2}(M-\Pi)\,\rd \mu}.
\end{align}
\end{theorem}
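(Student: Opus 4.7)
The plan is to establish both directions of the biconditional, with the sufficiency being a routine verification and the necessity being the principal work.

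\textbf{Sufficiency.} Fix $\mu\in\mathcal{M}$ and define $\kappa$ by the right-hand side of~\eqref{eq:moc:degree:one:characterization}. The denominator is positive and finite by the finiteness axiom for admissible $D_4$-invariant measures, so $\kappa$ is well-defined. Normalization $\kappa(M)=1$ is immediate. Monotonicity follows from $\mu\geq 0$ and the pointwise order on copulas. Continuity with respect to pointwise convergence follows from the bounded convergence theorem, since copulas are uniformly bounded by $1$ and $|C-\Pi|\leq M-\Pi$ is $\mu$-integrable. The map $C\mapsto \int(C-\Pi)\,\rd\mu$ is affine in $C$, so $\kappa$ is of degree one. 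For permutation invariance, $\tau$-invariance of $\mu$ yields $\int C(v,u)\,\rd\mu(u,v)=\int C(u,v)\,\rd\mu(u,v)$ by a change of variables. For reflection symmetry, the copula $C^{*}$ of $(U,1-V)$ for $(U,V)\sim C$ satisfies $C^{*}(u,v)=u-C(u,1-v)$, so as functions of $(u,v)$ one has $C^{*}(u,v)-\Pi(u,v)=\Pi(u,1-v)-C(u,1-v)$, and $\sigma_2$-invariance of $\mu$ then gives $\int(C^{*}-\Pi)\,\rd\mu=-\int(C-\Pi)\,\rd\mu$.

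\textbf{Necessity.} Given $\kappa$ of degree one, the hypothesis makes $t\mapsto \kappa(tC+(1-t)C')$ affine in $t$, so $\kappa$ is an affine functional on the convex set $\mathcal{C}_2$ with $\kappa(\Pi)=0$. I would embed $\mathcal{C}_2-\Pi:=\{C-\Pi : C\in\mathcal{C}_2\}$ as a bounded convex subset of the Banach space $C_0(\II^2)$ of continuous functions on $\II^2$ vanishing on $\partial\II^2$; this is legitimate because every copula agrees with $\Pi$ on $\partial\II^2$. Continuity of $\kappa$ combined with its boundedness on $\mathcal{C}_2-\Pi$ allows extension by linearity to the span of $\mathcal{C}_2-\Pi$ and then by Hahn--Banach to a bounded linear functional on $C_0(\II^2)$. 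The Riesz--Markov representation theorem then yields a signed Borel measure $\nu$ on $\II^2$ with $\kappa(C)=c\int(C-\Pi)\,\rd\nu$ for some constant $c>0$. Monotonicity forces $\nu$ to be non-negative on differences $C-\Pi$ with $C\succeq \Pi$, and a density argument upgrades this to $\nu\geq 0$. Averaging $\nu$ over the action of $\mathcal{D}_4$ produces a $D_4$-invariant positive measure $\mu$ representing the same functional, since permutation invariance and reflection symmetry guarantee that this symmetrization does not change $\kappa$. Finiteness of $\int(M-\Pi)\,\rd\mu$ follows from boundedness of $\kappa$, positivity from $\kappa(M)=1\neq 0=\kappa(\Pi)$, and normalization then yields~\eqref{eq:moc:degree:one:characterization}.

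\textbf{Main obstacle.} The main difficulty lies in the necessity direction: promoting the abstract degree-one affine functional $\kappa$ to integration against a \emph{positive} and \emph{countably additive} Borel measure. Affinity, continuity, and the axioms give a finitely additive signed set function on a natural algebra of events, but $\sigma$-additivity and positivity on general Borel sets require careful interplay between the continuity axiom (providing tightness and regularity) and the monotonicity axiom (providing positivity on the cone of test functions generated by concordant differences $C-\Pi$ with $C\succeq\Pi$), together with a density argument showing that linear combinations of such differences approximate a sufficiently rich subspace of $C_0(\II^2)$. Once a positive countably additive $\nu$ is in hand, $D_4$-symmetrization and normalization are routine.
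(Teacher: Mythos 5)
First, a point of comparison: the paper does not prove this statement at all — it is imported verbatim as Theorem~1 of Edwards and Taylor (2009) — so there is no in-paper proof to measure yours against, and I can only assess your argument on its own terms.

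Your sufficiency direction is essentially sound except for one false bound: $|C-\Pi|\le M-\Pi$ fails (at $(u,v)=(0.9,0.1)$ one has $\Pi-W=0.09>0.01=M-\Pi$). The correct dominating function is $M-W=(M-\Pi)+(\Pi-W)$, and its $\mu$-integrability is not automatic: you need the $D_4$-invariance of $\mu$ (via $\xi=\sigma_2$, under which $\Pi-W$ pulls back to $M-\Pi$) to get $\int_{\II^2}(\Pi-W)\,\rd\mu=\int_{\II^2}(M-\Pi)\,\rd\mu<\infty$. Note that "copulas are uniformly bounded by $1$" is of no help here because $\mu$ need not be a finite measure. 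This part is fixable.

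The necessity direction has a genuine gap. Definition~\ref{def:admissible:D4]invariant:measures} only requires $0<\int_{\II^2}(M-\Pi)\,\rd\mu<\infty$; since $M-\Pi$ vanishes on $\partial\II^2$, an admissible measure may have infinite total mass, and for some degree-one measures of concordance it must. Concretely, the Gaussian rank correlation $\zeta$ is of degree one, and its generating measure — essentially unique after normalization and removal of boundary mass, as the paper itself records — is $\lambda_{\Phi,\Phi}$, the pushforward of planar Lebesgue measure, with $\lambda_{\Phi,\Phi}(\II^2)=\lambda_2(\IR^2)=\infty$. Hence $\zeta$ admits no finite generating measure, so the functional you build cannot be represented by an element of the dual of $C_0(\II^2)$: Riesz--Markov produces only finite regular signed measures. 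Relatedly, the Hahn--Banach step is unjustified, since $|\kappa|\le1$ on the convex set $\mathcal C_2$ gives no supremum-norm bound for the linear extension on $\operatorname{span}(\mathcal C_2-\Pi)$ (and, by the above, no such bound can exist in general). Any correct proof must construct $\mu$ by a route that tolerates mass accumulating at the boundary — Edwards and Taylor exploit the degree-one (biaffine) structure to define $\mu$ directly on rectangles in the open square — whereas your "main obstacle" paragraph identifies positivity and countable additivity but misses this more basic failure.
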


We denote a measure of concordance of the form in Equation~\eqref{eq:moc:degree:one:characterization} by $\kappa_\mu$, and call $\mu$ the \emph{generating measure} of $\kappa_\mu$.
Note that Edwards \& Taylor (2009) define $\mu$ on $\II^2$. They require $\mu$ to have zero mass on the boundary of $\II^2$ and to satisfy $\int_{\II^2}(M-\Pi)\,\rd \mu=1$ so that $\mu \in \mathcal M$ is uniquely determined.
This difference to our exposition does not affect the above characterization result since mass on the boundary of $\II^2$ does not change the values of the numerator and denominator in Equation~\eqref{eq:moc:degree:one:characterization}, and $\mu \in \mathcal M$ can always be normalized to be $\tilde \mu =\mu/\int_{\II^2}(M-\Pi)\,\rd \mu \in \mathcal M$ by Property~\ref{cond:finiteness:denominator} of Definition~3.
Note that since $\kappa_\mu(tC + (1-t)C')=t \kappa_\mu(C)+(1-t)\kappa_\mu(C')$ for any $t \in \II$ and $C,\ C' \in \mathcal C_2$, measures of concordance of degree one are convex-preserving.
Next we provide examples of measures of concordance of degree one. 

\begin{example}{(Examples of measures of concordance of degree one)}\label{ex:moc:degree:one}
\begin{enumerate}[topsep=0cm]
    \item[1)]\label{example:spearman:rho} \emph{Spearman's rho} (Spearman, 1904): If
    $\mu$ is the probability measure with its mass uniformly distributed on $\IIo^2$, then $\mu$ generates Spearman's rho $\rho_{\text{S}}(C)=12 \int_{\II^2}C\,\rd \Pi -3$.
    \item[2)]\label{example:blomqvist:beta} \emph{Blomqvist's beta} (Blomqvist, 1950):
    The Dirac measure $\mu=\delta_{(\frac{1}{2},\frac{1}{2})}$ generates Blomqvist's beta (also known as \emph{median correlation}) $\beta(C)=4C\left(\frac{1}{2},\frac{1}{2}\right)-1$.
    \item[3)]\label{example:gini:gamma} \emph{Gini's gamma} (Gini, 1914): If $\mu$ is the probability measure of $\frac{1}{2}(M+W)$, that is, if $\mu$ is the probability measure with its mass uniformly distributed on $\{(u,v)\in (0,1)^2: u=v\text{ or } u+v = 1\}$, then $\mu$ generates Gini's gamma $\gamma(C)=4\int_{\II^2}C\,\rd \left(M+W\right)-2$.
\end{enumerate}
\end{example}
Note that Kendall's tau (Kendall, 1938) defined by $\tau(C)=4\int_{[0,1]^2}C(u,v)\,\rd C(u,v)-1$ is a measure of concordance, but of degree two.

Since $M$ is the unique maximal element in $\mathcal C_2$ with respect to $\preceq$, one may want to require a stronger axiom than Property~\ref{axiom:normalization} of Definition~1:
\begin{enumerate}[topsep=0cm]\setcounter{enumi}{5}
    \item\label{axiom:refined:normalization} {\bf (Comonotone uniqueness)} $\kappa(C)=1$ if and only if $C=M$.
\end{enumerate}
See, for example,~Theorem~2.4 and Example~2.7 in~Strothmann, Dette \& Siburg (2022) for a potential application of this requirement.
By Property~\ref{axiom:reflection:symmetry} of Definition~1, Property~\ref{axiom:refined:normalization} implies that $\kappa(C)=-1 \Leftrightarrow C=W$. 
For measures of concordance of degree one, an additional condition is required on the generating measure $\mu$ so that $\kappa_\mu$ satisfies Property~\ref{axiom:refined:normalization}.

\begin{proposition}{(Generating measures for the comonotone uniqueness)}
\label{prop:refined:normalization:axiom:generating:measure}
Let $\kappa_\mu$ be a measure of concordance of degree one with generating measure $\mu \in \mathcal M$.
Then $\kappa_\mu$ satisfies Property~\ref{axiom:refined:normalization} if and only if the support of $\mu$ contains the main diagonal of $(0,1)^2$.
\end{proposition}

\begin{proof}%
Notice that $\kappa_\mu(C)=1$ if and only if $\int_{\IIo^2}(M-C)\rd \mu=0$ from Representation (2) in Theorem 1.

To show the necessity of Property~6, suppose that the support of $\mu$ contains the main diagonal of $(0,1)^2$, and that $\kappa_\mu(C)=1$ for $C \in \mathcal C_2$.
As seen in Example~2.6.4 of Durante \& Sempi (2006), $C=M$ holds if and only if $C(t,t)=t$ for all $t \in (0,1)$. 
In view of this, suppose that there exists $t_0 \in (0,1)$ such that $C(t_0,t_0)<M(t_0,t_0)=t_0$.
Since $t \mapsto h(t):=M(t,t)-C(t,t)$ is continuous on $(0,1)$, there exists $\epsilon>0$ such that $h$ is strictly positive on $(t_0-\epsilon,t_0+\epsilon)\subseteq (0,1)$.
By assumption, we have that 
 $\mu((t_0-\epsilon,t_0+\epsilon)^2)>0$.
 Together with $\kappa_\mu(C)=1$ and $C\preceq M$, it holds that
\begin{align*}
0=\int_{\IIo^2}(M-C)\rd\mu \geq \int_{(t_0-\epsilon,t_0+\epsilon)^2}(M-C)\rd\mu >0,
\end{align*}
which is a contradiction.

We show the sufficiency of Property~6 by contraposition, that is, if the support of $\mu$ does not contain the main diagonal of $(0,1)^2$, then $\kappa_\mu(C)=1$ does not imply $C=M$.
To this end, assume that the support of $\mu$ does not contain $(t_0,t_0)$ for some $t_0 \in (0,1)$.
Then there exists $\epsilon>0$ such that $\mu((t_0-\epsilon,t_0+\epsilon)^2)=0$ for $(t_0-\epsilon,t_0+\epsilon)\subseteq(0,1)$.
Let $C_\epsilon \in \mathcal C_2$ denote the ordinal sum of $\{M,W,M\}$ with respect to the grid $\{0,t_0-\epsilon,t_0+\epsilon,1\}$ (see Section~3.2.2 of Nelsen, 2006 for the definition of ordinal sums).
Then it is straightforward to check that $C_\epsilon\neq M$ on $(t_0-\epsilon,t_0+\epsilon)^2$ and $C_\epsilon=M$ elsewhere.
Therefore, we have that
\begin{align*}
\int_{\IIo^2}(M-C_\epsilon)\rd\mu =0,
\end{align*}
and thus $\kappa_\mu(C_\epsilon)=1$ for $C_\epsilon\neq M$.
\end{proof}

\section{Correlation-based measures of concordance}\label{sec:correlation:based:mocs}
For a distribution function $G:\IR\rightarrow \II$, its \emph{quantile
function} $G^\i:\IIo\rightarrow \IR$ is defined by $G^\i(p)=\inf\{x\in\IR:G(x)\ge p\}$, $p\in \IIo$.
A distribution function $G$ with a finite mean is called $\emph{symmetric}$ if $X-\mathbb{E}[X]\deq \mathbb{E}[X]-X$ for $X\sim G$.

\begin{definition}{(Concordance-inducing distributions)}
  A distribution function $G:\IR\rightarrow \II$ is \emph{concordance-inducing}
  if it is nondegenerate, symmetric and has a finite second moment.  The set of
  all concordance-inducing distribution functions is denoted by $\mathcal G$.
\end{definition}

For distribution functions $G,\ G_1$ and $G_2$, denote by 
$\lambda_G$ and $\lambda_{G_1,G_2}$ the push-forward Lebesgue measures $\lambda_G(A)=\lambda_1\{x\in \IR: G(x)\in A\}$, $A\in \mathfrak B(\IIo)$, and $\lambda_{G_1,G_2}(A)=\lambda_2\{(x,y)\in \IR^2: (G_1(x),G_2(y))\in A\}$, $A\in \mathfrak B(\IIo^2)$, respectively, where $\lambda_d$, $d\in \IN$, is the Lebesgue measure on $\IR^d$.
The following proposition states that $\lambda_{G,G}$ is a generating measure in the sense of Theorem~1 if $G$ is concordance-inducing.

\begin{proposition}{(Generating measures of correlation-based measures of concordance)}\label{prop:generating:measure:correlation:based:mocs}
For any concordance-inducing distribution $G \in \mathcal G$, the push-forward Lebesgue measure $\lambda_{G,G}$ is in $\mathcal M$, and the measure of concordance generated by $\lambda_{G,G}$ is given by
\begin{align}\label{eq:G:transformed:rank:correlation}
\kappa_{\lambda_{G,G}}(C)=\rho(G^\i(U),G^\i(V)),\quad (U,V)\sim C,
\end{align}
where $\rho$ is \emph{Pearson's correlation coefficient}.
\end{proposition}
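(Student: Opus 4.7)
The plan is to verify the two defining conditions of $\mathcal M$ for $\lambda_{G,G}$ separately, and then to read off $\kappa_{\lambda_{G,G}}(C)$ by applying the classical H\"offding covariance identity to $X=G^\i(U)$ and $Y=G^\i(V)$ with $(U,V)\sim C$.

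For the $D_4$-invariance, I would use that $\mathcal D_4$ is generated by $\tau,\sigma_1,\sigma_2$, so it suffices to check these three symmetries. The $\tau$-invariance is immediate from the symmetric roles of the two coordinates in the map $f:(x,y)\mapsto(G(x),G(y))$. For $\sigma_1$, I would let $\mu_G=\E[X]$ with $X\sim G$ and introduce the involution $T:(x,y)\mapsto(2\mu_G-x,y)$ on $\IR^2$, which preserves $\lambda_2$. The symmetry of $G$ gives $G(2\mu_G-x)=1-G(x)$ at every continuity point of $G$, whence $\sigma_1\circ f=f\circ T$ outside a $\lambda_2$-null set, and therefore $\lambda_{G,G}(\sigma_1(A))=\lambda_2(T^\i(f^\i(A)))=\lambda_2(f^\i(A))=\lambda_{G,G}(A)$ for every Borel $A\subseteq \II^2$; the $\sigma_2$-case is analogous.

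For the finiteness condition and the correlation formula, I would handle both at once by invoking H\"offding's covariance identity. Since $G$ has finite second moment, $X,Y\sim G$ have finite means and $\E|XY|<\infty$, so
\begin{align*}
\Cov(X,Y)=\int_{\IR^2}\bigl\{\Prob(X\le x,Y\le y)-G(x)G(y)\bigr\}\,\rd x\,\rd y=\int_{\II^2}(C-\Pi)\,\rd\lambda_{G,G},
\end{align*}
where the second equality follows from the quantile identity $\Prob(X\le x,Y\le y)=C(G(x),G(y))$ and the definition of $\lambda_{G,G}$ as a pushforward. Specialising to $C=M$ forces $U=V$ and hence $X=Y$ almost surely, so
\begin{align*}
\int_{\II^2}(M-\Pi)\,\rd\lambda_{G,G}=\Var(X)\in(0,\infty),
\end{align*}
with positivity and finiteness provided by $G$ being nondegenerate with finite second moment. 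This establishes Condition~\ref{cond:finiteness:denominator}, hence $\lambda_{G,G}\in\mathcal M$. Dividing the two identities and using $\Var(X)=\Var(Y)$ yields
\begin{align*}
\kappa_{\lambda_{G,G}}(C)=\frac{\Cov(X,Y)}{\sqrt{\Var(X)\Var(Y)}}=\rho(G^\i(U),G^\i(V)).
\end{align*}

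The main technical care is needed in the $\sigma_1$-step: the symmetry identity $G(2\mu_G-x)=1-G(x)$ can fail at the (at most countable) set of discontinuity points of $G$, but this set is $\lambda_2$-null in $\IR^2$ and therefore invisible to $\lambda_{G,G}$. A minor secondary subtlety is applying H\"offding's identity with $X,Y$ defined via the generalized inverse $G^\i$ rather than a genuine inverse, but the standard Fubini-based proof is insensitive to this distinction once the quantile duality $G^\i(p)\le x\Leftrightarrow p\le G(x)$ and the finite second-moment assumption are in hand, so no modification is required.
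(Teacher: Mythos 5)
Your proof is correct and follows essentially the same route as the paper's: $D_4$-invariance via the Lebesgue-measure-preserving reflection $x\mapsto 2\mu_G-x$, and the finiteness condition together with the correlation formula via Hoeffding's identity combined with the change of variables for the pushforward measure. Your extra care about the identity $G(2\mu_G-x)=1-G(x)$ possibly failing on the countable discontinuity set of $G$ (which is $\lambda_2$-null) is a minor refinement the paper glosses over, and your positivity argument ($X=Y$ almost surely under $C=M$) is an equivalent substitute for the paper's appeal to $\rho(G^{\i}(U),G^{\i}(U))=1$.
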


We call the map $\kappa_{\lambda_{G,G}}$ in
Equation~\eqref{eq:G:transformed:rank:correlation} the \emph{$G$-transformed
  rank correlation}.

\begin{proof}
For a concordance-inducing distribution function $G \in \mathcal G$, we first show that $\lambda_{G,G}$ satisfies Properties~\ref{cond:D4:invariance} and~\ref{cond:finiteness:denominator} of Definition~3.

To show Property~\ref{cond:D4:invariance}, the $D_4$-invariance of $\lambda_{G,G}$, it suffices to show that $\lambda_G(\sigma(A))=\lambda_G(A)$,  $A\in \mathfrak B(\IIo)$, for $\sigma(x)=1-x$, $x\in (0,1)$, since $\lambda_{G,G}$ is the product measure $\lambda_{G}\times \lambda_{G}$.
By symmetry of $G$, it holds that $G(x+\mu)=1-G((\mu-x)-)$ for all $x\in \IR$, where $\mu$ is the mean of $G$.
Note that the set of discontinuity points of $G$ is at most countable.
Together with the invariance of the Lebesgue measure $\lambda_1$ under isometry, it holds that
\begin{align*}
\lambda_{G}(\sigma(A))&=\lambda_1\{x\in \IR: G(x)\in \sigma(A)\}\\
& =\lambda_1\{2\mu - \tilde x\in \IR: G(\tilde x)\in A\}\\
&= \lambda_1\{\tilde x\in \IR: G(\tilde x)\in A\}=\lambda_{G}(A).
\end{align*}

To show the finiteness of the integral in Property~\ref{cond:finiteness:denominator}, we define the map $f:(x,y)\mapsto (G(x),G(y))$, $(x,y)\in \IR^2$ for notational convenience.
By Theorem 3.6.1\ in Bogachev (2007), $\int_{\IIo^2}(C-\Pi)\,\rd (\lambda_2\circ f^\i) <\infty$ if and only if  $\int_{\IR^2}(C-\Pi)\circ f \,\rd \lambda_2 <\infty$.
Moreover, under any of these integrability conditions, it holds that $\int_{\IIo^2}(C-\Pi)\,\rd (\lambda_2\circ f^\i) =\int_{\IR^2}(C-\Pi)\circ f \,\rd \lambda_2$. 
The latter formula holds for any $C \in \mathcal C_2$ since
Hoeffding's identity (see Lemma~2 in Lehmann, 1966) leads to
\begin{align*}
\int_{\IR^2}(C-\Pi)\circ f \,\rd \lambda_2 &= \int_{\IR^2}\left\{C(G(x),G(y))-\Pi(G(x),G(y))\right\}\,\rd \lambda_2(x,y)\\
&= \Cov(G^\i(U),G^\i(V)),\quad (U,V)\sim C,\\
&\leq \Var(G^\i(U)) <\infty,
\end{align*}
where the last inequality follows from the Cauchy--Schwarz inequality.
By taking $C=M$, we have that $\int_{\IIo^2}(M-\Pi)\,\rd \lambda_{G,G}<\infty$.
To show that $\int_{\IIo^2}(M-\Pi)\,\rd \lambda_{G,G}>0$, we use the property
$\rho(G^\i(U),G^\i(U))=1$ for $U \sim \Unif(0,1)$ (see Part~3 of Theorem~4 in Embrechts, McNeil \& Straumann, 2002).
Since $G \in \mathcal G$ is nondegenerate, Hoeffding's identity implies that
\begin{align*}
\int_{\IIo^2}(M-\Pi)\,\rd \lambda_{G,G}&=\int_{\IR^2}(M-\Pi)\circ f \,\rd \lambda_2
= \Cov(G^\i(U),G^\i(U))\\
& = \rho(G^\i(U),G^\i(U))\Var(G^\i(U)) \\
&=\Var(G^\i(U))>0.
\end{align*}

Finally, Representation~\eqref{eq:G:transformed:rank:correlation} is obtained via
\begin{align*}
\kappa_{\lambda_{G,G}}(C)&=\frac{\int_{\IIo^2}(C-\Pi)\,\rd \lambda_{G,G}}{\int_{\IIo^2}(M-\Pi)\,\rd \lambda_{G,G}}=\frac{\Cov(G^\i(U),G^\i(V))}{\Cov(G^\i(U),G^\i(U))}\\
&= \frac{\Cov(G^\i(U),G^\i(V))}{\sqrt{\Var(G^\i(U))}\sqrt{\Var(G^\i(V))}}=\rho(G^\i(U),G^\i(V)).
\end{align*}
\end{proof}

\begin{example}{(Examples of $G$-transformed rank correlations)}
Spearman's rho $\rho_{\text{S}}$, Blomqvist's beta $\beta$, and \emph{Gaussian rank correlation} $\zeta$ (Sidak, Sen \& Hajek,\ 1999 and Boudt, Cornelissen \& Croux,\ 2012; also known as \emph{van der Waerden's coefficient} or \emph{normal score correlation}) are all examples of $G$-transformed rank correlations, which have standard uniform, symmetric Bernoulli, and standard Gaussian distributions, respectively, as concordance-inducing distributions (see Example~1 in Hofert \& Koike, 2019).
\end{example}

Note that Gini's gamma is not a $G$-transformed rank correlation since the probability measure of $\frac{1}{2}(M+W)$ is not a push-forward Lebesgue measure of the form $\lambda_{G,G}$.

For two Borel functions $g_1, g_2:\IIo\rightarrow \IR$, let
\begin{align}\label{eq:g1:g2:transformed:correlation}
\rho_{g_1,g_2}(C)=\rho(g_1(U),g_2(V)),\quad (U,V)\sim C.
\end{align}
We also write $\rho_g$ when $g_1=g_2=g$ for some $g:\IIo\rightarrow \IR$.
With this notation, the $G$-transformed rank correlation~\eqref{eq:G:transformed:rank:correlation} can be written as $\rho_{G^\i}$.
The next theorem states that, under some mild assumptions on $g_1$ and $g_2$, the case $g_1=g_2=G^\i$ for $G \in \mathcal G$ covers all measures of concordance of the form given in Equation~\eqref{eq:g1:g2:transformed:correlation}.

\begin{theorem}{(Characterization of correlation-based measures of concordance)}
\label{thm:characterization:correlation:based:mocs}
Let $g_1,g_2:\IIo\rightarrow \IR$ be left-continuous functions whose discontinuity points form a discrete set.
Then the map $\rho_{g_1,g_2}(C)=\rho(g_1(U),g_2(V))$, $(U,V)\sim C$, is a measure of concordance if and only if $\rho_{g_1,g_2}=\rho_{G^\i}$ for some $G \in \mathcal G$.
\end{theorem}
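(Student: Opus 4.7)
The ``if'' direction is immediate from Proposition~\ref{prop:generating:measure:correlation:based:mocs}, so my plan addresses the converse. The overall idea is to extract three structural facts about $(g_1, g_2)$---\emph{$g_1$ equals $g_2$ up to a positive affine transform}, \emph{the law of $g_1(U)$ is symmetric with finite variance}, and \emph{$g_1$ is monotone}---one from each of the Normalization, Reflection Symmetry, and Monotonicity axioms, and then to recognize that a left-continuous monotone function with prescribed symmetric law $G$ must coincide with $G^\i$.

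I would begin with Normalization: since $\rho_{g_1,g_2}(M) = \rho(g_1(U), g_2(U)) = 1$ for $U \sim \Unif(0,1)$, the equality case of Cauchy--Schwarz gives $g_2(U) = a\,g_1(U) + b$ a.s.\ with $a > 0$, and left-continuity combined with density of the complement of the exceptional null set promotes this to $g_2(u) = a\,g_1(u) + b$ for every $u \in (0,1]$. Pearson's correlation being invariant under positive affine transformations of its arguments, I may replace $g_2$ by $g_1$ and write $g := g_1$. Next, Reflection Symmetry at $C = M$ gives $\rho(g(U), g(1-U)) = -1$, hence $g(1-U) = -c\,g(U) + d$ a.s.\ with $c > 0$. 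Combining with $g(1-U) \eqd g(U)$ (since $U \eqd 1-U$) and matching variances forces $c = 1$, so the CDF $G$ of $g(U)$ is symmetric about $\E[g(U)]$. As nondegeneracy and finite second moment are built into the existence of $\rho_g$, this yields $G \in \mathcal G$.

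The main obstacle, which I view as the technical heart of the proof, is showing $g$ is monotone. My plan is to apply the Monotonicity axiom to a standard FGM-type perturbation of $\Pi$: fix Lebesgue points $u_1 < u_2$ and $v_1 < v_2$ of $g$ in $(0,1)$, take left-intervals $I_i = (u_i - \epsilon, u_i]$ and $J_j = (v_j - \epsilon, v_j]$ of common length $\epsilon$ small enough that they are pairwise disjoint and interior to $\II$, and let $C_\delta$ have Lebesgue density $1 + \delta f$ on $\II^2$ where
\begin{equation*}
f = \I_{I_1 \times J_1} + \I_{I_2 \times J_2} - \I_{I_1 \times J_2} - \I_{I_2 \times J_1}.
\end{equation*}
Equal interval lengths preserve uniform marginals, and the factorization
\begin{equation*}
C_\delta(u,v) - \Pi(u,v) = \delta\,\bigl(|[0,u]\cap I_1| - |[0,u]\cap I_2|\bigr)\bigl(|[0,v]\cap J_1| - |[0,v]\cap J_2|\bigr) \geq 0
\end{equation*}
shows $C_\delta \succeq \Pi$ for small $\delta > 0$. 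Monotonicity then gives $\int g(u)g(v)f(u,v)\,\rd u\,\rd v \geq 0$, which factors as $\bigl(\int_{I_1} g - \int_{I_2} g\bigr)\bigl(\int_{J_1} g - \int_{J_2} g\bigr) \geq 0$; dividing by $\epsilon^2$ and invoking Lebesgue differentiation yields $(g(u_1) - g(u_2))(g(v_1) - g(v_2)) \geq 0$ at all Lebesgue points, which left-continuity extends to every $u_1 < u_2$ and $v_1 < v_2$ in $(0,1]$. A standard sign argument then forces $g$ to be either monotone increasing or monotone decreasing on $\II$.

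It remains to identify $g$ with $\pm G^\i$. A left-continuous monotone increasing function with law $G$ is unique and equals $G^\i$, so in the increasing case $g = G^\i$. If $g$ is decreasing, then $-g$ is left-continuous monotone increasing, and the symmetry of $G$ gives $-g(U) \sim G$, whence $-g = G^\i$ and $\rho_g = \rho_{-G^\i} = \rho_{G^\i}$ by invariance of correlation under joint sign change. Either way $\rho_{g_1, g_2} = \rho_{G^\i}$ with $G \in \mathcal G$, as required.
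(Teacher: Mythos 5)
Your proof is correct, and for the ``only if'' direction it takes a genuinely more self-contained route than the paper's. The paper outsources the two key structural facts to Hofert and Koike (2019): it cites the proof of their Proposition~1 for the inequality $(g_1(u')-g_1(u))(g_2(v')-g_2(v))\ge 0$ at continuity points (extending it to all points via Lemma~\ref{lemma:discontinuity:points:left:cont:functions} on discontinuities of left-continuous functions), identifies $g_i$ with $G_i^\i$ via F\"ollmer--Schied, and then invokes their Theorem~1 to conclude $G_1=G_2=G\in\mathcal G$. You instead extract everything directly from the axioms: the Cauchy--Schwarz equality case of Normalization at $C=M$ gives $g_2=a g_1+b$ with $a>0$, collapsing the two transforms into one at the outset; Reflection Symmetry at $M$ together with $U\eqd 1-U$ and variance matching yields symmetry of the law of $g(U)$; and an explicit FGM-type perturbation $1+\delta f$ of $\Pi$ (whose margins are indeed uniform and which satisfies $C_\delta\succeq\Pi$ by your factorization) combined with Lebesgue differentiation yields the monotonicity inequality at Lebesgue points, which left-continuity upgrades to all points. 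Working with Lebesgue points rather than continuity points lets you bypass the countability lemma entirely, and deriving $g_1=ag_2+b$ and the symmetry of $G$ from first principles removes the dependence on the earlier characterization theorem for continuous transforms; the price is that you must carry out the perturbation computation yourself, which the paper avoids by citation. One cosmetic slip in your final step: in the decreasing case $-g(U)$ has law $G$ only up to the location shift $2\E[g(U)]$ unless the mean is zero, but since $\rho$ is location--scale invariant this does not affect the conclusion $\rho_{g_1,g_2}=\rho_{G^\i}$.
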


\begin{proof}

For any concordance-inducing distribution $G \in \mathcal G$, the map $\rho_{g_1,g_2}=\rho_{G^\i}$ is a measure of concordance by Proposition~2.

To show the converse, let $u,\,u' \in \IIo$, $u<u'$ be any continuity points of $g_1$ and $v,\, v' \in \IIo$, $v<v'$, be any continuity points of $g_2$.
Since the discontinuity points of $g_1$ and those of $g_2$ are isolated, there exists a sufficiently large $N \in \IN$ and indices $i,i',j,j' \in \{1,\dots,N-1\}$ such that
  \begin{align*}
\frac{i-1}{N}<u \leq \frac{i}{N},\quad
\frac{i'-1}{N}<u' \leq \frac{i'}{N},\quad
\frac{j-1}{N}<v \leq \frac{j}{N},\quad
\frac{j'-1}{N}<v' \leq \frac{j'}{N}
  \end{align*}
  with $(\frac{i-1}{N},\frac{i}{N}]\cap (\frac{i'-1}{N},\frac{i'}{N}]=\emptyset$, $(\frac{j-1}{N},\frac{j}{N}]\cap (\frac{j'-1}{N},\frac{j'}{N}]=\emptyset$ and such that $g_1$ and $g_2$ are continuous on $(\frac{i-1}{N},\frac{i}{N}]\cup (\frac{i'-1}{N},\frac{i'}{N}]$ and $(\frac{j-1}{N},\frac{j}{N}]\cup (\frac{j'-1}{N},\frac{j'}{N}]$, respectively.
 Since the function $(x,y)\mapsto g_1(x)g_2(y)$ is continuous on  $(\frac{i-1}{N},\frac{i}{N}]\cup (\frac{i'-1}{N},\frac{i'}{N}] \times (\frac{j-1}{N},\frac{j}{N}]\cup (\frac{j'-1}{N},\frac{j'}{N}]$, we have that
\begin{align}\label{eq:monotonicity:g1:g2}
\left\{g_1(u')-g_1(u)\right\}\left\{g_2(v')-g_2(v)\right\}\geq 0,
\end{align}
which can be established by following the proof of Proposition~1 in Hofert \& Koike (2019).
Discontinuity points of $g_1$ and $g_2$ are at most countable (see also Theorem~5.61 in Thomson, Judith \& Andrew,\ 2008 or Proposition~2 in Yim-Ming, 1965), and thus Inequality~\eqref{eq:monotonicity:g1:g2} also holds for countably many discontinuity points of $g_1$ and $g_2$ by considering increasing sequences of continuity points of $g_1$ and $g_2$ approaching their discontinuity points from the left.
Therefore, $g_1$ and $g_2$ are monotone in the sense that $\{g_1(u')-g_1(u)\}\{g_2(v')-g_2(v)\}\geq 0$ for any $0<u<u' < 1$ and $0<v<v'<1$.

By monotonicity of $g_1$ and $g_2$, it holds that $g_1$ and $g_2$ are both non-decreasing, or both non-increasing.
First consider the former case.
For $C \in \mathcal C_2$ and $(U,V) \sim C$, let $X=g_1(U)\sim G_1$ and $Y=g_2(V)\sim G_2$.
By Lemma A. 23 in F\"ollmer \& Schied (2011), we have that $(G_1^\i(u),G_2^\i(v))=(g_1(u),g_2(v))$, $(u,v)\in (0,1)^2$ almost everywhere (the two functions actually coincide on $(u,v)\in (0,1)^2$ by left-continuity of $G_1$, $G_2$, $g_1$ and $g_2$).
Therefore, we have that $\rho(g_1(U),g_2(V))=\rho(G_1^\i(U),G_2^\i(V))
$.
Finally one can take $G_1=G_2=G$ for some $G \in \mathcal G$ by Theorem~1 in Hofert \& Koike (2019).
In the case where $g_1$ and $g_2$ are non-increasing, the problem reduces to the above case since $\rho(g_1(U),g_2(V))=\rho(-g_1(U),-g_2(V))$ for non-decreasing functions $\tilde g_1 = -g_1$ and $\tilde g_2 = -g_2$ by the scale invariance of $\rho$.
\end{proof}

\begin{remark}{(Remarks on Theorem~2)}
\begin{enumerate}[topsep=0cm]
  \item Since values of $g_1$ and $g_2$ on countable sets do not affect the value of $\rho_{g_1,g_2}(C)$, the same result as in Theorem~2 holds when $g_1$ or $g_2$ are right-continuous, or more generally, when they are continuous except on a discrete (and thus at most countable) set.
 As mentioned in the proof, left-continuity of $g_1$ and $g_2$ leads to $g_1=G^\i$ and $g_2 = G^\i$ on $(0,1)^2$.
 These equalities hold $\lambda_1$-almost everywhere in the aforementioned more general case.
 \item The correlation representation $\rho_{G^\i}$ is not unique since the location-scale invariance of the Pearson's correlation coefficient $\rho$ implies that
$\rho_{G_{\mu_1,\sigma_1}^\i,G_{\mu_2,\sigma_2}^\i}=\rho_{G^\i}$ for any $\mu_1,\ \mu_2 \in\IR$ and $\sigma_1,\ \sigma_2>0$, where $G_{\mu,\sigma}(x)=G\left(\frac{x-\mu}{\sigma}\right)$, $x \in \IR$.
\end{enumerate}
\end{remark}

\section{Correlation mixture representation of Gini's gamma}\label{sec:correlation:representation:gini:gamma}

In this section we show that Gini's gamma can be written as a mixture of $G$-transformed rank correlations.
We first propose the following class of measures of concordance.

\begin{definition}{(Generalized Gini's gamma)}\label{def:generalized:gini:gamma}
A measure of concordance is called a \emph{generalized Gini's gamma} if it admits the representation in Equation~\eqref{eq:moc:degree:one:characterization} for some $\mu \in \mathcal M$, and the mass of the generating measure $\mu$ is concentrated on $\{(u,v)\in (0,1)^2: u=v\text{ or } u+v = 1\}$.
\end{definition}

By $D_4$-invariance (Property~1 of Definition~3) and Proposition~1, a measure of concordance of degree one $\kappa_\mu$, $\mu \in \mathcal M$, satisfies Property~\ref{axiom:refined:normalization} if and only if the support of $\mu$ contains the main and the secondary diagonals.
Therefore, a generalized Gini's gamma has a generating measure whose mass is concentrated on the minimal support required for the measure to satisfy Property~\ref{axiom:refined:normalization}. 

\begin{example}{(Examples of generalized Gini's gamma)}\label{ex:generalized:gini:gamma}
\begin{enumerate}[topsep=0cm]
\item\label{ex:gini:gamma} \emph{Gini's gamma}: Since the generating measure of Gini's gamma $\gamma$ has its mass uniformly distributed on the main and secondary diagonals, $\gamma$ is a generalized Gini's gamma.
\item\label{ex:generalized:blomqvist:beta} \emph{Generalized Blomqvist's beta}: For $p \in \left(0,\frac{1}{2}\right]$, denote by $\beta_p$ a measure of concordance generated by
\begin{align}\label{eq:generating:measure:generalized:blomqvist:beta}
\mu_p = \delta_{(p,p)}+\delta_{(p,1-p)}+\delta_{(1-p,p)}+\delta_{(1-p,1-p)}.
\end{align}
We call $\beta_p$ \emph{generalized Blomqvist's beta} since $p=\frac{1}{2}$ reduces to Blomqvist's beta.
\end{enumerate}
\end{example}

\begin{lemma}{(Correlation representation of generalized Blomqvist's beta)}\label{lemma:correlation:generalized:blomqvist}
For $0<p\le \frac{1}{2}$, generalized Blomqvist's beta $\beta_p$ can be represented as
\begin{align*}
\beta_p(C)=\rho(G^\i(U;p),G^\i(V;p))
\end{align*}
for $(U,V)\sim C$, where
\begin{align}\label{gp:function}
G(x;p)=p\bone_{\{x\geq -1\}}+(1-2p)\bone_{\{x\geq 0\}}+p\bone_{\{x\geq 1\}}.
\end{align}
\end{lemma}

\begin{proof}
Let $X_p$ be a discrete random variable taking on $-1$, $0$ and $1$ with probabilities $\Prob(X_p=-1)=p$, $\Prob(X_p=0)=1-2p$ and $\Prob(X_p=1)=p$, respectively.
Then $X_p \sim G(\cdot;p)$ and its quantile function is given by
\begin{align}\label{eq:generalized:inverse:three:points:distribution}
G^\i(u;p)=\begin{cases}
-1, & \text{ if } 0< u \le p,\\
0, & \text{ if } p <u \le 1-p,\\
1, & \text{ if } 1-p<u < 1.
\end{cases}
\end{align}

For $C \in \mathcal C_2$ and $(U,V)\sim C$, we have that
\begin{align*}
\mathbb{E}[G^\i(U;p)G^\i(V;p)]&= 
\Prob\left\{(U,V)\in [0,p]^2\right\} \\
&\quad -\Prob\left\{(U,V)\in [0,p]\times (1-p,1]\right\}\\
&\quad -\Prob\left\{(U,V)\in (1-p,1]\times [0,p]\right\}\\
&\quad +\Prob\left\{(U,V)\in (1-p,1]\times (1-p,1]\right\}\\
&= C(p,p) - (p-C(p,1-p)) - (p-C(1-p,p)) \\
&\quad + 1-(1-p)-(1-p)+C(1-p,1-p)\\
&=C(p,p) + C(p,1-p) \\
&\quad +C(1-p,p) + C(1-p,1-p)-1\\
&=\sum_{\xi \in \{e,\sigma_1,\sigma_2,\sigma_1\sigma_2\}}C\circ\xi(p,p)-1,
\end{align*}
where $G^\i(\cdot;p)$ is as in Equation~\eqref{eq:generalized:inverse:three:points:distribution}.
Since $\Var(X_p)=2p$, we have that
\begin{align}
\nonumber&\phantom{={}} \sum_{\xi \in \{e,\sigma_1,\sigma_2,\sigma_1\sigma_2\}}(C-\Pi)\circ\xi(p,p)\\
\nonumber&=\sum_{\xi \in \{e,\sigma_1,\sigma_2,\sigma_1\sigma_2\}}C\circ\xi(p,p)-
\sum_{\xi \in \{e,\sigma_1,\sigma_2,\sigma_1\sigma_2\}}\Pi\circ\xi(p,p)\\
\nonumber&=\mathbb{E}[G^\i(U;p)G^\i(V;p)]- \mathbb{E}[G^\i(U;p)]\mathbb{E}[G^\i(V;p)]\\
\nonumber&= \Var(X_p)\times\rho(G^\i(U;p),G^\i(V;p))\\
\label{eq:auxiliary:correlation:Gp}&=2p\times\rho(G^\i(U;p),G^\i(V;p)).
\end{align}
Since $G(\cdot;p)$ is symmetric for any $p \in \left(0,\frac{1}{2}\right]$, we have that $\rho_{G^\i(\cdot;p)}(M)=1$; see Part~3 of Theorem~4 in Embrechts, McNeil \& Straumann\ (2002).
Therefore, we have that
\begin{align*}
\sum_{\xi \in \{e,\sigma_1,\sigma_2,\sigma_1\sigma_2\}}(M-\Pi)\circ\xi(p,p)=2p,
\end{align*}
and thus that
\begin{align*}
\beta_p(C)=\frac{\sum_{\xi \in \{e,\sigma_1,\sigma_2,\sigma_1\sigma_2\}}(C-\Pi)\circ\xi(p,p)}{\sum_{\xi \in \{e,\sigma_1,\sigma_2,\sigma_1\sigma_2\}}(M-\Pi)\circ\xi(p,p)}=\rho(G^\i(U;p),G^\i(V;p)).
\end{align*}
\end{proof}

A generator of generalized Gini's gamma can be regarded as a mixture of generators of generalized Blomqvist's beta's with respect to $p\in (0,\frac{1}{2}]$.
This interpretation leads to the following representation of generalized Gini's gamma in terms of Pearson's correlation coefficient.

\begin{theorem}{(Correlation mixture representation of generalized Gini's gamma)}\label{thm:correlation:representation:generalized:gini:gamma}
Generalized Gini's gamma can be represented as
\begin{align}\label{eq:correlation:representation:generalized:gini:gamma}
\gamma_\nu(C)=\int_{\left(0,\frac{1}{2}\right]}\rho(G^\i(U;p),G^\i(V;p))\,\rd \nu(p),
\end{align}
for some unique Borel probability measure $\nu$ on $\left(0,\frac{1}{2}\right]$, where $(U,V)\sim C$ and $G$ is given in~\eqref{gp:function}.
\end{theorem}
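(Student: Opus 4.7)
The plan is to decompose the generating measure $\mu$ of a generalized Gini's gamma into a mixture of the four-point measures $\mu_p = \delta_{(p,p)} + \delta_{(p,1-p)} + \delta_{(1-p,p)} + \delta_{(1-p,1-p)}$, $p \in (0, 1/2]$, from Example~\ref{ex:generalized:gini:gamma}; to identify the measure of concordance generated by each $\mu_p$ with the $G(\cdot;p)$-transformed rank correlation via Proposition~\ref{prop:generating:measure:correlation:based:mocs}; and finally to extract uniqueness of $\nu$ from uniqueness of this decomposition.

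First I would establish the decomposition. By Definition~\ref{def:generalized:gini:gamma}, a generating measure $\mu \in \mathcal M$ of $\gamma_\nu$ is $D_4$-invariant, admissible, and concentrated on $\{x = y\} \cup \{x + y = 1\}$. The $D_4$-orbits on these diagonals (excluding the corners, which lie on $\partial \II^2$ and do not contribute to~\eqref{eq:moc:degree:one:characterization}) are parameterized uniquely by $p = \min(t, 1-t) \in (0, 1/2]$: for $p < 1/2$ the orbit of $(p, p)$ is the four-point set $\{(p,p), (1-p,1-p), (p,1-p), (1-p,p)\}$, degenerating to the singleton $\{(1/2, 1/2)\}$ at $p = 1/2$. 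Since $\mu$ assigns equal mass to every point of each orbit (by $D_4$-invariance), one obtains a canonical representation $\mu = \int_{(0, 1/2]} \mu_p \, \rd \tilde\nu(p)$ for a unique nonnegative Borel measure $\tilde\nu$ on $(0, 1/2]$---namely the pushforward of $\mu|_{\{(t,t): t \in (0,1)\}}$ under $t \mapsto \min(t, 1-t)$, with a factor-$1/4$ correction at $p = 1/2$ to absorb the orbit collapse.

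Two direct computations then link each $\mu_p$ with the $G(\cdot;p)$-transformed rank correlation. Evaluating at the four orbit points yields $\int (M - \Pi) \, \rd \mu_p = 2p(1-p) + 2p^2 + 2p(1-p) = 2p > 0$. Moreover, since $G(\cdot;p)$ takes value $p$ on $[-1, 0)$ and $1-p$ on $[0, 1)$---each of unit Lebesgue measure---while the preimages of $\{0, 1\}$ correspond to $\partial \II^2$ (where $C - \Pi$ vanishes), the pushforward $\lambda_{G(\cdot;p), G(\cdot;p)}$ restricted to the interior of $\II^2$ coincides with $\mu_p$. Proposition~\ref{prop:generating:measure:correlation:based:mocs} then gives $\rho(G^\i(U;p), G^\i(V;p)) = (2p)^{-1} \int (C - \Pi) \, \rd \mu_p$. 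Substituting the decomposition into~\eqref{eq:moc:degree:one:characterization} and applying Fubini yields
\begin{align*}
\gamma_\nu(C) = \frac{\int (C-\Pi) \, \rd \mu}{\int (M - \Pi) \, \rd \mu} = \int_{(0, 1/2]} \rho(G^\i(U;p), G^\i(V;p)) \, \rd \nu(p),
\end{align*}
where $\rd \nu(p) = 2p \, \rd \tilde\nu(p) / \int_{(0, 1/2]} 2p \, \rd \tilde\nu(p)$ is a probability measure (well-defined because the denominator equals $\int (M - \Pi) \, \rd \mu \in (0, \infty)$ by Condition~\ref{cond:finiteness:denominator} in Definition~\ref{def:admissible:D4]invariant:measures}).

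For uniqueness, suppose two probability measures $\nu_1, \nu_2$ on $(0, 1/2]$ both yield representation~\eqref{eq:correlation:representation:generalized:gini:gamma}. Each $\nu_i$ gives rise to the measure $\tilde\nu_i$ with $\rd \tilde\nu_i(p) = (2p)^{-1} \rd \nu_i(p)$ and thence to the measure $\mu_i = \int \mu_p \, \rd \tilde\nu_i(p)$, which has zero boundary mass and satisfies $\int (M-\Pi) \, \rd \mu_i = 1$; by the uniqueness of the normalized generating measure (discussion following Theorem~\ref{thm:moc:degree:one:characterization}), $\gamma_{\nu_1} = \gamma_{\nu_2}$ forces $\mu_1 = \mu_2$. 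The injectivity of the assignment $\tilde\nu \mapsto \int \mu_p \, \rd \tilde\nu$---which follows from the disjointness of orbits for distinct $p \in (0, 1/2]$, since $\tilde\nu(A) = \mu(\{(p, p) : p \in A\})$ for $A \subset (0, 1/2)$---then gives $\tilde\nu_1 = \tilde\nu_2$, whence $\nu_1 = \nu_2$. The main technical obstacle I anticipate lies in the decomposition step: one must verify the Borel measurability of the orbit projection and carefully handle the multiplicity-four degeneracy at $p = 1/2$, so that the accompanying $1/4$ and $(2p)^{-1}$ normalizations match consistently throughout.
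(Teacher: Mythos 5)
Your proposal is correct and follows essentially the same route as the paper: decompose the $D_4$-invariant generating measure over the orbits $\{(p,p),(p,1-p),(1-p,p),(1-p,1-p)\}$ parameterized by $p\in\left(0,\tfrac{1}{2}\right]$, identify each orbit's contribution with $2p\,\rho_{G^\i(\cdot;p)}$, reweight by $2p$ to obtain the probability measure $\nu$, and inherit uniqueness from that of the normalized generating measure. The only difference is cosmetic---you get the per-orbit identity by recognizing $\mu_p$ as $\lambda_{G(\cdot;p),G(\cdot;p)}$ restricted to the open square and invoking Proposition~\ref{prop:generating:measure:correlation:based:mocs}, whereas the paper computes $\E[G^\i(U;p)G^\i(V;p)]$ and $\Var(X_p)=2p$ directly (also note the stray third term in your displayed sum for $\int(M-\Pi)\,\rd\mu_p$; the correct total is $2p(1-p)+2p^2=2p$).
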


\begin{proof}
Let $\kappa$ be a generalized Gini's gamma.
Since $\kappa$ is a measure of concordance of degree one, Theorem~1 in Edwards \& Taylor~(2009) implies that there exists a unique generating measure $\mu$ such that $\int_{\IIo^2}(M-\Pi)\,\rd \mu=1$.
Let $\Gamma=\left\{(u,v)\in\left(0,\frac{1}{2}\right): u=v\right \}$.
Since $\mu$ is $D_4$-invariant and concentrated on the main and secondary diagonals of $(0,1)^2$,
we have that
\begin{align*}
\kappa(C)&=\int_{(0,1)^2}(C-\Pi)\,\rd  \mu\\
&=\sum_{\xi \in \{e,\sigma_1,\sigma_2,\sigma_1\sigma_2\}}\int_{\xi(\Gamma)}(C-\Pi)\,\rd \mu
+ (C-\Pi)\left(\frac{1}{2},\frac{1}{2}\right)\, \mu\left\{\left(\frac{1}{2},\frac{1}{2}\right)\right\}
\\
&=\sum_{\xi \in \{e,\sigma_1,\sigma_2,\sigma_1\sigma_2\}}\left\{
\int_{\Gamma}(C-\Pi)\circ \xi\,\rd \mu\right.\\
& \left. \phantom{\sum_{\xi \in \{e,\sigma_1,\sigma_2,\sigma_1\sigma_2\}}\left\{
\int_{\Gamma}(C
\right.}
+ \frac{1}{4}\, (C-\Pi)\circ\xi\left(\frac{1}{2},\frac{1}{2}\right) \,\mu\left\{\left(\frac{1}{2},\frac{1}{2}\right)\right\}
\right\}\\
&=\int_{\left(0,\frac{1}{2}\right]}\biggl(\sum_{\xi \in \{e,\sigma_1,\sigma_2,\sigma_1\sigma_2\}}(C-\Pi)\circ\xi(p,p)\biggr)\,\rd \tilde \nu(p),
\end{align*}
where $\tilde \nu$ is a Borel measure on $\left(0,\frac{1}{2}\right]$ determined via
\begin{align*}
\tilde \nu((0,p])=
\mu((0,p]^2\cap \Gamma)+
 \frac{1}{4}
  \mu\left\{\left(\frac{1}{2},\frac{1}{2}\right)\right\}
  \delta_{\frac{1}{2}}((0,p]^2).
\end{align*}
With $\nu$ being a Borel measure defined via $\rd \nu(p) = 2p \,\rd \tilde \nu(p)$, Equation~\eqref{eq:auxiliary:correlation:Gp} yields
\begin{align*}
\kappa(C)=\int_{\left(0,\frac{1}{2}\right]}\rho_{G^\i(\cdot;p)}(C) \,\rd \nu(p).
\end{align*}
Together with $\kappa(M)=1$, we have that $1=\nu(\left(0,\frac{1}{2}\right])$, that is, $\nu$ is a probability measure on $\left(0,\frac{1}{2}\right]$.
Since the normalized generating measure $\mu$ is unique, the probability measure $\nu$ is also unique by its construction.
\end{proof}

\begin{example}{(Examples of $\nu$-measures)}
 \begin{enumerate}[topsep=0cm]
\item\label{ex:gini:gamma:nu:measure} \emph{Gini's gamma}: The generating measure of Gini's gamma $\gamma$ has a mass of 8 uniformly on the two diagonals.
Therefore, the corresponding $\nu$-measure has a linear probability density function $f_\nu(p)=8p$, $p \in \left(0,\frac{1}{2}\right]$.
\item\label{ex:generalized:blomqvist:beta} \emph{Generalized Blomqvist's beta}: For $p \in \left(0,\frac{1}{2}\right]$, the generalized Blomqvist's beta $\beta_p$ has the generating measure~\eqref{eq:generating:measure:generalized:blomqvist:beta}.
Therefore, the corresponding $\nu$-measure has a point mass 1 at $p$.
\end{enumerate}
\end{example}

\section{Compatibility bounds for Gini's gamma}\label{sec:compatibility:bounds:gini:gamma}

As an application of Theorems~2 and~3, in this section we derive compatibility bounds for $G$-transformed rank correlation and generalized Gini's gamma.

For $C \in \mathcal C_d$ and $(i,j)\in \{1,\dots,d\}^2$, $i\neq j$, denote by $C_{i,j}$ the $(i,j)$-marginal copula of $C$.
Then the $d$-dimensional square matrix $(\kappa(C_{i,j}))_{i,j=1,\dots,d}$ is called the $\kappa$-\emph{matrix} of $C \in \mathcal C_d$.

\begin{definition}{(Compatible set)}
\label{def:compatible:set}
For $d \in \IN$ and a measure of concordance $\kappa:\mathcal C_2 \rightarrow [-1,1]$, the range
\begin{align*}
\mathcal R_d(\kappa) = \{(\kappa(C_{i,j}))_{i,j=1,\dots,d}: C \in \mathcal C_d\}
\end{align*}
is called a $d$-dimensional \emph{compatible set} of $\kappa$.
\end{definition}

Recall that $\mathcal P_d$ and $\mathcal P_d^{\rm{B}}$ represent elliptope and cut polytope, i.e., the set of all $d$-dimensional correlation matrices and the set of all correlation matrices of $d$-dimensional random vectors with symmetric Bernoulli margins, respectively.
The next proposition says that $\mathcal P_d$ and $\mathcal P_d^{\rm{B}}$ are the upper and lower bounds of the compatible sets of $G$-transformed rank correlations.

\begin{proposition}{(Hofert \& Koike, 2019)}
\label{prop:compatibility:bounds:correlation:MOC}
\begin{enumerate}[topsep=0cm]
\item\label{point:prop:compatibility:bounds:correlation:moc} $\mathcal P_d^{\rm{B}}\subseteq \mathcal R_d(\rho_{G^\i}) \subseteq \mathcal P_d$ for all $G \in \mathcal G$.
\item\label{point:prop:attainability:compatibility:bounds:correlation:moc} $\mathcal R_d(\zeta)=\mathcal P_d$ and $\mathcal R_d(\beta)=\mathcal P_d^{\rm{B}}$.
\end{enumerate}
\end{proposition}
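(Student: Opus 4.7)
The plan is to prove the two inclusions of Part~\ref{point:prop:compatibility:bounds:correlation:moc} first, and then deduce the two identities of Part~\ref{point:prop:attainability:compatibility:bounds:correlation:moc} from them together with two concrete realizations.

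For the upper bound $\mathcal R_d(\rho_{G^\i}) \subseteq \mathcal P_d$, I would argue directly: for any $C \in \mathcal C_d$ with $(U_1,\dots,U_d)\sim C$, the $\rho_{G^\i}$-matrix is by definition the Pearson correlation matrix of the random vector $(G^\i(U_1),\dots,G^\i(U_d))$, which is automatically positive semidefinite with unit diagonal. For the lower bound $\mathcal P_d^{\text{B}} \subseteq \mathcal R_d(\rho_{G^\i})$, fix $R=(r_{i,j}) \in \mathcal P_d^{\text{B}}$, assume without loss of generality that $G$ has mean zero (the correlation is translation invariant, and the family $\mathcal G$ is closed under centering), and let $(B_1,\dots,B_d)$ be symmetric Bernoulli on $\{-1,+1\}^d$ with $\Cor(B_i,B_j)=r_{i,j}$, independent of $Z \sim G$. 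Define $X_i = B_i |Z|$. Symmetry of $G$ about $0$ gives $X_i \deq Z\sim G$, while independence of $(B_1,\dots,B_d)$ and $|Z|$ yields
\begin{align*}
\Cov(X_i,X_j)=\E[B_i B_j]\E[Z^2] = r_{i,j}\Var(Z) = r_{i,j}\sqrt{\Var(X_i)\Var(X_j)}.
\end{align*}
Hence the copula $C$ of $(X_1,\dots,X_d)$ satisfies $\rho_{G^\i}(C_{i,j}) = r_{i,j}$, so $R \in \mathcal R_d(\rho_{G^\i})$.

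For Part~\ref{point:prop:attainability:compatibility:bounds:correlation:moc}, the identity $\mathcal R_d(\zeta)=\mathcal P_d$ follows by combining the upper bound of Part~\ref{point:prop:compatibility:bounds:correlation:moc} applied to $G=\Phi$ with the elementary observation that for every $R\in\mathcal P_d$ the multivariate normal distribution $N_d(\bm 0,R)$ has Gaussian copula whose $\zeta$-matrix equals $R$. For $\mathcal R_d(\beta)=\mathcal P_d^{\text{B}}$, the inclusion ``$\supseteq$'' is the lower bound of Part~\ref{point:prop:compatibility:bounds:correlation:moc} specialized to the symmetric Bernoulli $G$. For ``$\subseteq$'', note that in this case $G^\i$ is (up to a null set) the map $u\mapsto \sign(u-\tfrac12)$, so for any $C\in\mathcal C_d$ with $(U_1,\dots,U_d)\sim C$ the transformed vector $(G^\i(U_1),\dots,G^\i(U_d))$ has symmetric Bernoulli margins, and its Pearson correlation matrix \emph{is} the $\beta$-matrix of $C$; hence the $\beta$-matrix lies in $\mathcal P_d^{\text{B}}$.

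The main obstacle is the lower bound in Part~\ref{point:prop:compatibility:bounds:correlation:moc}: one has to exhibit, for each element of the cut polytope, an actual $d$-copula with prescribed $\rho_{G^\i}$-matrix. The construction $X_i=B_i|Z|$ is the key idea; it works because symmetry of $G$ about its mean allows the sign and magnitude of a $G$-distributed variable to be independently resampled, and this sign-magnitude decoupling is precisely what translates a symmetric Bernoulli correlation structure into a Pearson correlation structure on $G$-distributed variables. The rest of the proposition reduces to straightforward identifications of the transform $G^\i$ for the two specific concordance-inducing distributions.
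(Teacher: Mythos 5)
Your proof is correct, but note first that the paper does not prove this proposition at all: it is stated as a citation to Hofert and Koike (2019), and the only related argument the paper itself supplies is Proposition~\ref{prop:lower:compatibility:bound:degree:one:moc}, which obtains the lower bound $\mathcal P_d^{\text{B}}\subseteq\mathcal R_d(\kappa)$ for \emph{every} degree-one measure of concordance by a different route: the polytope representation $\mathcal P_d^{\text{B}}=\overline{\conv}\{P^{(\bbb)}:\bbb\in\{0,1\}^d,\ b_1=1\}$ of Huber and Mari\'c, attainment of each vertex $P^{(\bbb)}$ by the extremal copula of $\bU^{(\bbb)}=U\bbb+(1-U)(\bone_d-\bbb)$, and linearity of $\kappa_\mu$ under convex combinations of copulas. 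Your sign--magnitude construction $X_i=B_i|Z|$ is more elementary and self-contained (it uses only the definition of $\mathcal P_d^{\text{B}}$ as the set of correlation matrices of symmetric Bernoulli vectors, not its polytope structure), but it is tied to the specific correlation form of $\rho_{G^\i}$ and would not generalize to arbitrary degree-one measures the way the paper's convexity argument does; conversely, the paper's argument does not by itself give the upper bound or the two attainability statements, which you handle correctly (positive semidefiniteness of a genuine correlation matrix; the Gaussian copula of $\N_d(\bzero,R)$; the identification $G^\i(u)=\sign\bigl(u-\tfrac12\bigr)$ a.e.\ for symmetric Bernoulli $G$). Two details deserve one sentence each in a written-up version. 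First, when $G$ has atoms the copula of $(X_1,\dots,X_d)$ is not unique, so you should justify $\rho_{G^\i}(C_{i,j})=r_{i,j}$ by noting that for \emph{any} copula $C$ of $\bX$ with $(U_1,\dots,U_d)\sim C$ one has $(G^\i(U_1),\dots,G^\i(U_d))\deq\bX$, which follows from Sklar's theorem together with the equivalence $G^\i(u)\le x\Leftrightarrow u\le G(x)$. Second, the claim $X_i=B_i|Z|\deq Z$ is exactly where the symmetry of $G$ about its mean enters and is the one distributional identity worth computing explicitly; it holds for all symmetric $G$, including those with an atom at the centre.
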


It is shown in Huber \& Mari\'c (2015, 2019) that $\mathcal P_d^{\rm{B}}$ is the convex hull given by
\begin{align*}
\mathcal P_d^{\rm{B}}=\conv\{P^{(\bbb)}: \bbb  \in \{0,1\}^d,\ b_1=1\},
\end{align*}
where $P^{(\bbb)}=(2\bbb-1)(2\bbb-1)\T$, that is $\mathcal P_d^{\rm{B}}$ is a convex polytope with $2^{d-1}$ vertices whose off-diagonal entries are all $1$ or $-1$.
For any measure of concordance $\kappa$, it holds that $\kappa(M)=1$ and $\kappa(W)=-1$, and thus
\begin{align*}
P^{(\bbb)} = (\kappa(\bU^{(\bbb)}_{i,j}))_{i,j=1,\dots,d}\quad\text{for}\quad \bU^{(\bbb)}= U \bbb + (1-U)(\bone_d - \bbb),
\end{align*}
where $\bbb \in \{0,1\}^d$, $U \sim \Unif(0,1)$ and $\bone_d=(1,\dots,1)\in \IR^d$.
As a consequence, the lower bound in Part~\ref{point:prop:compatibility:bounds:correlation:moc} of Proposition~3 remains valid for any measure of concordance of degree one since they are convex-preserving.

\begin{proposition}{(Lower compatibility bound of $\kappa_{\mu}$)}
\label{prop:lower:compatibility:bound:degree:one:moc}
Let $\kappa$ be any measure of concordance of degree one.
Then $\mathcal P_d^{\rm{B}} \subseteq \mathcal R_d(\kappa)$.
\end{proposition}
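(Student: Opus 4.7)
The plan is to carry out, in full detail, the sketch given in the paragraph preceding the proposition. The three ingredients are: the vertex description $\mathcal{P}_d^{\text{B}}=\overline{\conv}\{P^{(\bbb)}:\bbb\in\{0,1\}^d,\ b_1=1\}$ with $P^{(\bbb)}=(2\bbb-\bone_d)(2\bbb-\bone_d)\T$; the normalizations $\kappa(M)=1$ and $\kappa(W)=-1$ that are available for any measure of concordance; and the affine linearity $\kappa(tC+(1-t)C')=t\kappa(C)+(1-t)\kappa(C')$ for $t\in\II$, which characterizes degree one.

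First I would verify that each vertex $P^{(\bbb)}$ is realized as a $\kappa$-matrix. For $\bbb\in\{0,1\}^d$ define $\bU^{(\bbb)}=U\bbb+(1-U)(\bone_d-\bbb)$ with $U\sim\Unif(0,1)$; its univariate marginals are standard uniform, so it has a unique copula $C^{(\bbb)}\in\mathcal{C}_d$. For any pair $(i,j)$ the bivariate marginal $(U_i^{(\bbb)},U_j^{(\bbb)})$ is comonotonic when $b_i=b_j$ and counter-monotonic when $b_i\neq b_j$, so $C_{i,j}^{(\bbb)}\in\{M,W\}$ and $\kappa(C_{i,j}^{(\bbb)})=(2b_i-1)(2b_j-1)$. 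Thus the $\kappa$-matrix of $C^{(\bbb)}$ is exactly $P^{(\bbb)}$, and $P^{(\bbb)}\in\mathcal{R}_d(\kappa)$ for every $\bbb$.

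Next I would lift this vertex-by-vertex statement to the whole convex hull. For any weights $\lambda_\bbb\geq 0$ with $\sum_\bbb\lambda_\bbb=1$, the mixture $C:=\sum_\bbb\lambda_\bbb C^{(\bbb)}$ is again in $\mathcal{C}_d$, and its bivariate marginals are $C_{i,j}=\sum_\bbb\lambda_\bbb C_{i,j}^{(\bbb)}$ by the linearity of marginalization. Iterating the degree-one linearity of $\kappa$ (from two-term convex combinations to finite ones is an elementary induction) yields $\kappa(C_{i,j})=\sum_\bbb\lambda_\bbb\kappa(C_{i,j}^{(\bbb)})$, so the $\kappa$-matrix of $C$ equals $\sum_\bbb\lambda_\bbb P^{(\bbb)}$. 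This proves $\conv\{P^{(\bbb)}:\bbb\in\{0,1\}^d,\ b_1=1\}\subseteq\mathcal{R}_d(\kappa)$.

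Finally I would close the argument by noting that the vertex set is finite, so its convex hull in the finite-dimensional space of symmetric $d\times d$ matrices is automatically compact and hence coincides with its closure; therefore $\mathcal{P}_d^{\text{B}}=\conv\{P^{(\bbb)}\}\subseteq\mathcal{R}_d(\kappa)$. I do not expect a genuine obstacle: the argument is a clean combination of finite-dimensional convex geometry with the degree-one linearity of $\kappa$. The only minor subtlety worth flagging is that the $\overline{\conv}$ in the vertex description of $\mathcal{P}_d^{\text{B}}$ is redundant here because the indexing set is finite, so we do not need to invoke the continuity axiom of $\kappa$ to pass to the closure.
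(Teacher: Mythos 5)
Your proposal is correct and follows exactly the route the paper sketches in the paragraph preceding the proposition: realize each vertex $P^{(\bbb)}$ via the copula of $\bU^{(\bbb)}=U\bbb+(1-U)(\bone_d-\bbb)$ using $\kappa(M)=1$ and $\kappa(W)=-1$, then pass to convex combinations by the degree-one linearity of $\kappa$. Your added observations (that marginalization commutes with mixing, and that the closure in $\overline{\conv}$ is redundant for a finite vertex set) are accurate fillings-in of details the paper leaves implicit, not a different argument.
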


The next proposition states that the upper bound in Part~\ref{point:prop:compatibility:bounds:correlation:moc} of Proposition~3 remains valid for generalized Gini's gamma.

\begin{proposition}{(Upper compatibility bound for $\gamma_\nu$)}
\label{prop:compatibility:bounds:generalized:gini:gamma}
Let $\gamma_\nu$ be a generalized Gini's gamma.
Then $\mathcal P_d^{\rm{B}} \subseteq \mathcal R_d(\gamma_\nu) \subseteq \mathcal P_d$.
\end{proposition}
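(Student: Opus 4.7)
The plan is to split the proof into two parts: the lower bound $\mathcal P_d^{\text{B}} \subseteq \mathcal R_d(\gamma_\nu)$ and the upper bound $\mathcal R_d(\gamma_\nu) \subseteq \mathcal P_d$. The lower bound is essentially free: since $\gamma_\nu$ is generated by a measure concentrated on the two diagonals, it is a measure of concordance of degree one, so Proposition~\ref{prop:lower:compatibility:bound:degree:one:moc} applies directly and yields $\mathcal P_d^{\text{B}} \subseteq \mathcal R_d(\gamma_\nu)$.

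For the upper bound, the strategy is to lift the correlation mixture representation in Theorem~\ref{thm:correlation:representation:generalized:gini:gamma} from the bivariate to the multivariate setting and then invoke the closure properties of the elliptope $\mathcal P_d$. First I would verify that for every $p \in \left(0,\tfrac{1}{2}\right]$ the three-point distribution $G(\cdot;p)$ lies in $\mathcal G$, since it is nondegenerate, symmetric around $0$, and has $\Var(X_p)=2p<\infty$. Then, given any $C \in \mathcal C_d$ with marginal copulas $C_{i,j}$, Theorem~\ref{thm:correlation:representation:generalized:gini:gamma} applied entrywise gives
\begin{align*}
\gamma_\nu(C_{i,j}) = \int_{\left(0,\frac{1}{2}\right]} \rho\bigl(G^\i(U_i;p),G^\i(U_j;p)\bigr)\,\rd \nu(p), \qquad i,j=1,\dots,d,
\end{align*}
where $(U_1,\dots,U_d)\sim C$. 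For each fixed $p$, Part~\ref{point:prop:compatibility:bounds:correlation:moc} of Proposition~\ref{prop:compatibility:bounds:correlation:MOC} tells us that $(\rho(G^\i(U_i;p),G^\i(U_j;p)))_{i,j=1,\dots,d} \in \mathcal P_d$.

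The last step is to argue that the $\nu$-mixture of these elliptope-valued matrices remains in $\mathcal P_d$. This reduces to the observation that $\mathcal P_d$ is a convex set closed under integration against probability measures: for any $\bx \in \IR^d$, the scalar map $p \mapsto \bx\T R(p) \bx$, where $R(p)=(\rho(G^\i(U_i;p),G^\i(U_j;p)))_{i,j}$, is nonnegative and Borel measurable in $p$, so
\begin{align*}
\bx\T \biggl(\int_{\left(0,\frac{1}{2}\right]} R(p)\,\rd \nu(p)\biggr)\bx = \int_{\left(0,\frac{1}{2}\right]} \bx\T R(p) \bx\,\rd \nu(p) \geq 0,
\end{align*}
while the diagonal entries integrate to $\int 1\,\rd \nu = 1$ because $\rho(G^\i(U_i;p),G^\i(U_i;p))=1$ and $\nu$ is a probability measure. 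Hence $(\gamma_\nu(C_{i,j}))_{i,j} \in \mathcal P_d$, as required.

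The only mildly delicate point is justifying the entrywise application of the representation and the measurability of $p\mapsto R(p)$, both of which follow from the explicit form~\eqref{eq:generalized:inverse:three:points:distribution} and the expression for $\E[G^\i(U_i;p)G^\i(U_j;p)]$ computed in the proof of Theorem~\ref{thm:correlation:representation:generalized:gini:gamma} (it is a polynomial in the values $C_{i,j}(p,p),C_{i,j}(p,1-p),\dots$, each of which is measurable in $p$). Everything else is routine.
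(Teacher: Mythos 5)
Your proof is correct, and both halves match the paper's skeleton: the lower bound via Proposition~\ref{prop:lower:compatibility:bound:degree:one:moc}, and the upper bound via the mixture representation of Theorem~\ref{thm:correlation:representation:generalized:gini:gamma} combined with the fact that each $R(p)$ lies in $\mathcal P_d$ (Part~\ref{point:prop:compatibility:bounds:correlation:moc} of Proposition~\ref{prop:compatibility:bounds:correlation:MOC}; indeed $R(p)$ is literally the correlation matrix of $(G^\i(U_1;p),\dots,G^\i(U_d;p))$). Where you diverge is the final step: the paper discretizes $\nu$ into $\nu^{[n]}=\sum_k w_k\delta_{p_k}$, places each finite mixture in $\mathcal P_d$ by convexity, and then passes to the limit using closedness of $\mathcal P_d$; you instead integrate directly, checking that positive semidefiniteness and the unit diagonal survive integration against the probability measure $\nu$ via the quadratic form $\bx\T R(p)\bx\geq 0$ and $\rho(G^\i(U_i;p),G^\i(U_i;p))=1$. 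Your route is a bit more economical --- it needs only measurability and boundedness of $p\mapsto R(p)$ (which you correctly note follows from continuity of the copula entries in $p$) and dispenses with the approximation and the topological closedness of $\mathcal P_d$ entirely --- while the paper's route generalizes more readily to situations where one only knows convexity and closedness of the target set rather than an explicit positive-semidefiniteness criterion. Both are valid; no gaps.
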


\begin{proof}
Since $\mathcal P_d^{\rm{B}} \subseteq \mathcal R_d(\gamma_\nu)$ holds by Proposition~4, we will show that $\mathcal R_d(\gamma_\nu)\subseteq \mathcal P_d$.
Let $\nu$ be the probability measure such that the correlation representation~\eqref{eq:correlation:representation:generalized:gini:gamma} holds for $\gamma_\nu$.
Since the integrand $p \rightarrow \rho(G^\i(U;p),G^\i(V;p)) \in [-1,1]$ is continuous and bounded, it holds that
\begin{align*}
\gamma_{\nu}(C)=\lim_{n\rightarrow \infty} \gamma_{\nu^{[n]}}(C)
=\lim_{n\rightarrow \infty}
\sum_{k=1}^n w_k \rho_{G^\i(\cdot;p_k)}(C),
\end{align*}
where $0=p_0<p_1<\cdots<p_n=\frac{1}{2}$,  $w_k = \nu((p_{k-1},p_k])\geq 0$ and $\nu^{[n]}=\sum_{k=1}^n w_k \delta_{p_k}$.
Since $\sum_{k=1}^n w_k =\nu((0,\frac{1}{2}])=1$, $\nu^{[n]}$ is a probability measure.
Moreover, since $G(\cdot;p) \in \mathcal G$, we have that $\mathcal R_d(\rho_{G^\i(\cdot;p_k)})\subseteq \mathcal P_d$ by Part~\ref{point:prop:compatibility:bounds:correlation:moc} of Proposition~3.
Finally, we have that $\mathcal R_d(\gamma_{\nu^{[n]}})\subseteq \mathcal P_d$ for any $n \in \IN$ by convexity of $\mathcal P_d$, and that $\mathcal R_d(\gamma_{\nu})\subseteq \mathcal P_d$ by closedness of $\mathcal P_d$.
\end{proof}

$\mathcal R_d(\gamma_\nu)$ is in general not equal to $\mathcal P_d^{\rm{B}}$ as seen in the following example.

\begin{example}{($\mathcal P_d^{\rm{B}} \neq \mathcal R_d(\gamma_\nu)$)}\label{ex:strictness:lower:bound:generalized:gini:gamma:compatibility}
We construct a $3$-dimensional correlation matrix $P$ such that $P \notin  \mathcal P_3^{\rm{B}}$ and $P \in \mathcal R_3(\gamma)$ for Gini's gamma.
Denote by $P(r)$, $r\in[-1,1]$, the 3-dimensional correlation matrix whose off-diagonal entries are all equal to $r$.
As seen in Example~4 of Hofert \& Koike (2019), we have that $P(r) \in \mathcal P_3$ if and only if $-\frac{1}{2}\leq r \leq 1$, and $P(r) \in \mathcal P_d^{\rm{B}}$ if and only if $-\frac{1}{3}\leq r \leq 1$.
Moreover, Meyer (2013) showed that
\begin{align*}
\gamma(C_{r}^{\text{Ga}})=\frac{4}{\pi}\operatorname{arcsin}\left(\frac{\sqrt{(1+r)(3+r)}-\sqrt{(1-r)(3-r)}}{4}\right),
\end{align*}
where $C_{r}^{\text{Ga}} \in \mathcal C_2$ is the Gaussian copula with correlation parameter $r \in [-1,1]$.
Since $\gamma(C_{-0.5}^{\text{Ga}})=-0.379$, we have that $P(-0.379) \in \mathcal R_d(\gamma)$.
Since $-\frac{1}{2}<-0.379 < -\frac{1}{3}$, it holds that $P(-0.379) \notin \mathcal P_d^{\rm{B}}$, and thus $P(-0.379)$ is the desired correlation matrix.
Note that the function $r \mapsto \gamma(C_{r}^{\text{Ga}})$ is continuous and strictly increasing on $(-1,1)$.
Therefore, $P(r) \in \mathcal P_d$ for $-\frac{1}{2}\leq r < -0.379$, but they are not attainable as a $\gamma$-matrix of any Gaussian copula.
\end{example}

\section{Conclusion}\label{sec:conclusion}
Our article provides two main contributions. First we showed that measures of
concordance of the form $\kappa(U,V)=\rho(g_1(U),g_2(V))$ are exhausted by the
case when $g_1=g_2=G^\i$ for so-called concordance-inducing distributions
$G$. We relaxed the continuity assumption previously imposed
on $g_1,g_2$ in Hofert \& Koike (2019) and our more general result now covers
Blomqvist's beta, which is obtained for non-continuous functions $g_1,g_2$.  Our
characterization in terms of $G$ is useful for determining whether a given pair
of functions $g_1,g_2$ leads to a correlation-based measure of concordance. For
instance, by monotonicity of $X \mapsto G^\i(F_X(X))$, $X\sim F_X$, the correlation
coefficient transformed by the so-called wrapping function proposed by
Raymaekers \& Rousseeuw (2021) cannot be a measure of concordance despite its
efficiency and robustness.

As second contribution, we introduced generalized Gini's gamma and showed that
it can be represented as a mixture of correlation representations of measures of
concordance. As an application of this mixture representation, we derived lower
and upper bounds for the compatible set of generalized Gini's gamma, which
enable us to test whether a given square matrix is attainable as a matrix of
pairwise generalized Gini's gammas.

Although we derived bounds for the compatible set of generalized Gini's gamma,
the complete characterization of this compatible set is still an open problem.
Another interesting question for future research is whether any measure of concordance of degree one can admit certain correlation representation helpful to determine bounds of its compatible set.

\section*{ACKNOWLEDGEMENTS}
The authors are grateful to the editor, an associate editor, and two anonymous referees for their helpful and constructive comments.
Takaaki Koike was supported by JSPS KAKENHI Grant Number JP21K13275. 
Marius Hofert acknowledges financial support from the Natural Sciences and Engineering Research Council of Canada (RGPIN-2020-04897 and RGPAS-2020-00093).

\bibliographystyle{apalike}

\end{document}